\newtheorem{theorem}{Theorem}[section]
\newtheorem{example}[theorem]{Example}
\newtheorem{proposition}[theorem]{Proposition}
\newtheorem{remark}[theorem]{Remark}
\newtheorem{definition}[theorem]{Definition}
\begin{document}

\title{Non-parabolic Spatial Hybrid Framed Curves and Their Applications in the Spatial Hybrid Number Space}
\author{Kaixin Yao\\
{\small \it School of Science, Yanshan University, Qinhuangdao 066004, P. R. China}\\
{\small \it e-mail: yaokx@ysu.edu.cn}
  }

\date{\today}

\maketitle
\begin{abstract}
In this paper, we define non-parabolic spatial hybrid framed curves in the spatial hybrid number space, which may have singularities, and prove the existence and uniqueness theorem for non-parabolic spatial hybrid framed curves. As applications, we define evolutes, involutes, pedal and contrapedal curves of non-parabolic spatial hybrid framed curves and discuss their relations.
\end{abstract}

\renewcommand{\thefootnote}{\fnsymbol{footnote}}
\footnote[0]{Key Words. non-parabolic spatial hybrid framed curves, evolutes, involutes, pedal curves, contrapedal curves.}
\footnote[0]{2020 Mathematics Subject Classiffcation. 53A35, 57R45, 15A63.}

%%%%%%%%%%%%%%%%%%%%%%%%%%%%%%%%%%
\section{Introduction}
The set of hybrid numbers, a non-communicative number system denoted by $\mathbb{H}$ or $\mathbb{K},$ unifies and generalizes complex numbers, dual numbers and hyperbolic numbers. The hybrid number system was defined by \"{O}zdemir in \cite{MO}. There have been a large quantity of results about algebra and geometry on the hybrid number system. In algebra, \"{O}zdemir researched representations of hybrid numbers such as the matrix representation, the polar representation and the exponential representation. He also gave formulas to find roots of the $n$-th degree of a hybrid number. Analogous to the similarity of matrices, \"{O}zt\"{u}rk and \"{O}zdemir defined the similarity of hybrid numbers and investigated properties of similar hybrid numbers \cite{IO}. The hybridian product can induce the scalar product $g$ of two hybrid numbers. The hybrid number system $\mathbb{H}$ endowed with the scalar product $g$ is called the hybrid number space. It is a four-dimensional linear space and isomorphic to the $(\mathbb{R}^4, g).$ All hybrid numbers whose scalar parts vanish form a subspace of $\mathbb{H},$ called the spatial hybrid number space. Akb\i y\i k gave Frenet-Serret formulas for spatial hybrid curves and hybrid curves \cite{MA}. These are basic and useful tools to deal with properties of regular curves.

In the history of differential geometry, people usually researched regular curves. But singular curves are common in our life, such as movement trajectories of objects and edges of buildings. We cannot establish the Frenet frame of a singular curve at its singluarity by traditional method since the derivarite of the curve is a zero vector here. Bishop proposed that there is more than one way to frame a curve \cite{RLB}. Although Bishop only considered regular curves, people found that his idea can be applied on singular curves. Fukunaga and Takahashi proved the existence and uniqueness theorems of Legendre curves, whose base curves may have singularities \cite{TF}. Increasing the dimension of the space, Honda and Takahashi defined framed curves in the Euclidean $n$-space \cite{SH1}. A framed curve is a smooth curve with $(n-1)$ orthogonal unit vectors satisfying the derivarite of the smooth curve is orthogonal to each unit vectors mentioned above at any point. One can find more appropriate frames for some framed curves. Honda and Takahashi arised the Frenet type frame for framed curves in the Euclidean 3-space \cite{SH2,SH3}. It can be regarded as the generalization of the Frenet frame for regular curves. So far questions related to framed curves can be handled and abundant results are produced \cite{BDY, OGY, KY1, KY2}.

In the present paper, we consider non-parabolic spatial hybrid framed curves and curves generated by them. In Section 2, we recall some basic knowledge of hybrid numbers. In Section 3, we define non-parabolic spatial hybrid framed curves in the spatial hybrid number space and give the fundamental theorem of non-parabolic spatial hybrid framed curves. As applications, we define curves generated by non-parabolic spatial hybrid framed curves and research their relations in Section 4. Finally, we give an example to show our results.

All maps and manifolds considered here are differentiable of class $C^\infty.$

%%%%%%%%%%%%%%%%%%%%%%%%%%%%%%%%%%
\section{Preliminaries}
Let
$$\mathbb H = \{a + b \bm i + c \bm \varepsilon + d \bm h ~|~ a,b,c,d \in \mathbb R\}$$
be the set of hybrid numbers and
$$\mathbb H^p = \{b \bm i + c \bm \varepsilon + d \bm h \in \mathbb H \}$$
be a subset of $\mathbb H.$ $\mathbb H^p$ is a three-dimensional vector space on $\mathbb{R}$ with the basis $\{\bm i, \bm \varepsilon, \bm h\}.$ For any $\bm H_1 = a_1 + b_1 \bm i + c_1 \bm \varepsilon + d_1 \bm h,~ \bm H_2 = a_2 + b_2 \bm i +c_2 \bm \varepsilon + d_2 \bm h \in \mathbb H,$ their hybridian product is
$$\begin{aligned}
	\bm H_1\bm H_2 =& a_1a_2 - b_1b_2 + b_1c_2 + b_2c_1 + d_1d_2 + a_1(b_2 \bm i +c_2 \bm \varepsilon + d_2 \bm h) + a_2(b_1 \bm i +c_1 \bm \varepsilon + d_1 \bm h)\\
	&+ (b_1d_2 - b_2d_1)\bm i + (b_1d_2 - b_2d_1 - c_1d_2 + c_2d_1)\bm \varepsilon + (b_2c_1 - b_1c_2)\bm h.
\end{aligned}$$
The hybridian product is associative but non-commutative. If $\bm H_1,\bm H_2 \in \mathbb H^p,$ then their hybridian product can be simplified to
$$\begin{aligned}
	\bm H_1\bm H_2 =& -b_1b_2 + b_1c_2 + b_2c_1 + d_1d_2 + (b_1d_2 - b_2d_1)\bm i \\
	&+ (b_1d_2 - b_2d_1 - c_1d_2 + c_2d_1)\bm \varepsilon + (b_2c_1 - b_1c_2)\bm h.
\end{aligned}$$
The conjugate of $\bm H_1$ is $\bm {\bar H}_1 = a_1 - b_1 \bm i - c_1 \bm \varepsilon - d_1 \bm h.$
The scalar product of $\bm H_1$ and $\bm H_2$ is defined by
$$\begin{aligned}
	g(\bm H_1,\bm H_2) =& \frac{1}{2}(\bm H_1 \bm {\bar H}_2 + \bm H_2 \bm {\bar H}_1)\\
	=& a_1a_2 + b_1b_2 - b_1c_2 - b_2c_1 - d_1d_2.
\end{aligned}$$
When $\bm H_1,\bm H_2 \in \mathbb H^p,$
$$\begin{aligned}
	g(\bm H_1,\bm H_2) =& b_1b_2 - b_1c_2 - b_2c_1 - d_1d_2\\
	=& (b_1, c_1, d_1)
	\begin{pmatrix}
		1 & -1 & 0\\
		-1 & 0 & 0\\
		0 & 0 & -1
	\end{pmatrix}
	\begin{pmatrix}
		b_2\\
		c_2\\
		d_2	
	\end{pmatrix}.
\end{aligned}$$

We still denote $g|_{\mathbb H^p \times \mathbb H^p}$ by $g$ and call $(\mathbb H^p, g)$ the spatial hybrid number space. We can indentify $(\mathbb H^p, g)$ and $(\mathbb R^3, g).$ $\bm H_1$ and $\bm H_2$ are $g$-orthogonal if $g(\bm H_1,\bm H_2) = 0.$ For any $\bm H \in \mathbb H^p,$ it is called elliptic, hyperbolic or parabolic if $g(\bm H, \bm H)$ is positive, negative or zero, respectively. The norm of $\bm H$ is defined by $||\bm H|| = \sqrt{|g(\bm H, \bm H)|}.$ The vector product of $\bm H_1$ and $\bm H_2$ is defined by
$$\begin{aligned}
	\bm H_1 \times \bm H_2 =& \dfrac{1}{2}(\bm H_1 \bm {\bar H}_2 - \bm H_2 \bm {\bar H}_1)\\
	=&(b_2d_1 - b_1d_2)\bm i + (b_2d_1 - b_1d_2 - c_2d_1 + c_1d_2) \bm \varepsilon + (b_1c_2 - b_2c_1) \bm h\\
%	=&\det\begin{pmatrix}
%		\bm \varepsilon & \bm{i} + \bm \varepsilon & \bm{h}\\
%		b_1 & c_1 & d_1\\
%		b_2 & c_2 & d_2
%	\end{pmatrix}\\
	=&(b_1, c_1, d_1)
	\begin{pmatrix}
		-d_2 & -d_2 & c_2\\
		0 & d_2 & -b_2\\
		b_2 & b_2-c_2 & 0
	\end{pmatrix}
	\begin{pmatrix}
		\bm{i}\\
		\bm{\varepsilon}\\
		\bm{h}
	\end{pmatrix}.
\end{aligned}$$

\begin{proposition}
	{\rm Take $\bm H_k = b_k \bm i + c_k \bm \varepsilon + d_k \bm h \in \mathbb H^p, ~ k = 1,2,3,4$, then
		\begin{itemize}
			\item [\rm (1)] $g(\bm H_1 \times \bm H_2,\bm H_3) = -\det
			\begin{pmatrix}
				b_1 & c_1 & d_1 \\
				b_2 & c_2 & d_2 \\
				b_3 & c_3 & d_3
			\end{pmatrix}.$
			\item [\rm (2)] $(\bm H_1 \times \bm H_2) \times \bm H_3 = g(\bm H_1, \bm H_3)\bm H_2 - g(\bm H_2, \bm H_3)\bm H_1.$
			\item [\rm (3)] $g(\bm H_1 \times \bm H_2, \bm H_3 \times \bm H_4) = \det
			\begin{pmatrix}
				g(\bm H_1, \bm H_3) & g(\bm H_1, \bm H_4) \\
				g(\bm H_2, \bm H_3) & g(\bm H_2, \bm H_4)
			\end{pmatrix}.$
		\end{itemize}
	}
\end{proposition}
\begin{proof}
	\begin{itemize}
		\item [\rm (1)] $$\begin{aligned}
			&g(\bm H_1 \times \bm H_2, \bm H_3)\\
			=& (b_2d_1 - b_1d_2, b_2d_1 - b_1d_2 - c_2d_1 + c_1d_2, b_1c_2 - b_2c_1)
			\begin{pmatrix}
				1 & -1 & 0\\
				-1 & 0 & 0\\
				0 & 0 & -1
			\end{pmatrix}
			\begin{pmatrix}
				b_3\\
				c_3\\
				d_3
			\end{pmatrix}\\
			=& (c_2d_1 - c_1d_2, b_1d_2 - b_2d_1, b_2c_1 - b_1c_2)
			\begin{pmatrix}
				b_3\\
				c_3\\
				d_3
			\end{pmatrix}\\
			=&-\det \begin{pmatrix}
				b_1 & c_1 & d_1 \\
				b_2 & c_2 & d_2 \\
				b_3 & c_3 & d_3
			\end{pmatrix}.
			\end{aligned}$$
		
		\item [\rm (2)] $$\begin{aligned}
			(\bm H_1 \times \bm H_2) \times \bm H_3
			=& (b_1, c_1, d_1)
			\begin{pmatrix}
				-d_2 & -d_2 & c_2\\
				0 & d_2 & -b_2\\
				b_2 & b_2-c_2 & 0
			\end{pmatrix}
			\begin{pmatrix}
				-d_3 & -d_3 & c_3\\
				0 & d_3 & -b_3\\
				b_3 & b_3-c_3 & 0
			\end{pmatrix}
			\begin{pmatrix}
				\bm{i}\\
				\bm{\varepsilon}\\
				\bm{h}
			\end{pmatrix}\\
			=& \begin{pmatrix}
				d_3(b_1d_2-b_2d_1) + b_3(b_1c_2 - b_2c_1)\\
				d_3(c_1d_2 - c_2d_1) + (b_3-c_3)(b_1c_2 - b_2c_1)\\
				(b_3-c_3)(b_1d_2 - b_2d_1) - b_3(c_1d_2 - c_2d_1)
			\end{pmatrix}^T
			\begin{pmatrix}
				\bm{i}\\
				\bm{\varepsilon}\\
				\bm{h}
			\end{pmatrix},\\
			g(\bm H_1, \bm H_3) \bm H_2 =& (b_1b_3 - b_1c_3 - b_3c_1 - d_1d_3)(b_2 \bm i + c_2 \bm \varepsilon + d_2 \bm h),\\
			g(\bm H_2, \bm H_3) \bm H_1 =& (b_2b_3 - b_2c_3 - b_3c_2 - d_2d_3)(b_1 \bm i + c_1 \bm \varepsilon + d_1 \bm h).
			\end{aligned}$$
			So
			$$(\bm H_1 \times \bm H_2) \times \bm H_3 = g(\bm H_1, \bm H_3)\bm H_2 - g(\bm H_2, \bm H_3)\bm H_1.$$
		
		\item [\rm (3)] $$\begin{aligned}
			&g(\bm H_1 \times \bm H_2, \bm H_3 \times \bm H_4)\\
			=& g((\bm H_3 \times \bm H_4) \times \bm H_1, \bm H_2)\\
			=& g\big(g(\bm H_3, \bm H_1)\bm H_4 - g(\bm H_4, \bm H_1)\bm H_3, \bm H_2\big)\\
			=& g(\bm H_1, \bm H_3)g(\bm H_2, \bm H_4) - g(\bm H_1, \bm H_4)g(\bm H_2, \bm H_3).
		\end{aligned}$$
	\end{itemize}
\end{proof}

\section{Non-parabolic spatial hybrid framed curves}
Denote $\Delta = \{(\bm\nu_1, \bm\nu_2) \in \mathbb H^p \times \mathbb H^p ~|~ ||\bm\nu_1|| = ||\bm\nu_2|| = 1, g(\bm\nu_1, \bm\nu_2) = 0\}.$ We define non-parabolic spatial hybrid framed curves in $\mathbb H^p.$

\begin{definition}
	{\rm Let $I$ be an interval in $\mathbb R.$ $(\gamma, \bm\nu_1, \bm\nu_2) : I \rightarrow \mathbb H^p \times \Delta$ is called a non-parabolic spatial hybrid framed curve if $g(\gamma'(t), \bm\nu_1(t)) = g(\gamma'(t), \bm\nu_2(t)) = 0$ for all $t \in I.$ We call $\gamma : I \rightarrow \mathbb H^p$ a non-parabolic spatial hybrid framed base curve if there exists $(\bm\nu_1, \bm\nu_2) : I \rightarrow \Delta$ such that $(\gamma, \bm\nu_1, \bm\nu_2)$ is a non-parabolic spatial hybrid framed curve.
	}
\end{definition}
Let $\bm\mu(t) = \bm\nu_1(t) \times \bm\nu_2(t).$ Then $\{\bm\nu_1(t), \bm\nu_2(t), \bm\mu(t)\}$ is a $g$-orthogonal frame along $\gamma.$ The Frenet type formulas are
$$\begin{pmatrix}
	\bm\nu_1'(t)\\
	\bm\nu_2'(t)\\
	\bm\mu'(t)
\end{pmatrix}
=
\begin{pmatrix}
	0 & l(t) & m(t) \\
	-\delta_1\delta_2l(t) & 0 & n(t) \\
	-\delta_2m(t) & -\delta_1n(t) & 0
\end{pmatrix}
\begin{pmatrix}
	\bm\nu_1(t)\\
	\bm\nu_2(t)\\
	\bm\mu(t)
\end{pmatrix},
~\gamma'(t) = \alpha(t) \bm\mu(t),$$
where
$$\delta_1 = g(\bm\nu_1(t), \bm\nu_1(t)), ~ \delta_2 = g(\bm\nu_2(t), \bm\nu_2(t)), ~ l(t) = \delta_2g(\bm\nu_1'(t), \bm\nu_2(t)),$$
$$m(t) = \delta_1\delta_2g(\bm\nu_1'(t), \bm\mu(t)), ~ n(t) = \delta_1\delta_2g(\bm\nu_2'(t), \bm\mu(t)), ~ \alpha(t) = \delta_1\delta_2g(\gamma'(t), \bm\mu(t)).$$
The map $(l, m, n, \alpha) : I \rightarrow \mathbb R^4$ is called the curvature of $(\gamma, \bm\nu_1, \bm\nu_2).$ We say $(\gamma, \bm\nu_1, \bm\nu_2)$ is an elliptic (or a hyperbolic) spatial hybrid framed curve if $\delta_1\delta_2 = 1 ~(\text{or} ~ \delta_1\delta_2 = -1).$

It is obvious that $t_0 \in I$ is a singularity of $\gamma$ if and only if $\alpha(t_0) = 0.$

Now we give the fundamental theory of non-parabolic spatial hybrid framed curves. We first define spatial hybrid motions.

\begin{definition}
	{\rm A matrix $A \in \mathbb R^{3 \times 3}$ is called a spatial hybrid motion if $A^TGA = G, \det(A) = 1,$ where
		$$G = \begin{pmatrix}
			1 & -1 & 0\\
			-1 & 0 & 0\\
			0 & 0 & -1
		\end{pmatrix}.$$
	}
\end{definition}

Spatial hybrid motions form a group and keep the scalar product and the vector product.

\begin{proposition}
	{\rm All spatial hybrid motions form a group.}
\end{proposition}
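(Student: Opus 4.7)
The plan is to verify the four group axioms directly from the defining conditions $A^T G A = G$ and $\det(A) = 1$, treating spatial hybrid motions as a subset of $GL_3(\mathbb{R})$ under matrix multiplication. Associativity comes for free, so the real content is closure, identity, and existence of inverses.

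First I would handle closure. Given spatial hybrid motions $A$ and $B$, I compute $(AB)^T G (AB) = B^T(A^T G A)B = B^T G B = G$, and $\det(AB) = \det(A)\det(B) = 1$. The identity element is immediate: $I_3^T G I_3 = G$ and $\det(I_3) = 1$.

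The one step that needs a small observation is the existence of inverses. I first note that $G$ is invertible, by a quick cofactor expansion giving $\det(G) = 1$; taking determinants in $A^T G A = G$ then yields $\det(A)^2 = 1$, which is consistent with the normalization $\det(A) = 1$ and in particular shows $A$ is invertible. To verify $A^{-1}$ is itself a spatial hybrid motion, I multiply $A^T G A = G$ on the left by $(A^T)^{-1} = (A^{-1})^T$ and on the right by $A^{-1}$ to obtain $(A^{-1})^T G A^{-1} = G$, while $\det(A^{-1}) = 1/\det(A) = 1$.

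There is no real obstacle here; everything reduces to standard manipulations for matrices preserving a nondegenerate bilinear form (in this case $G$), so the argument is essentially the same as the proof that $O(p,q) \cap SL_n$ is a group. The only point that merits an explicit remark in the write-up is the determinant of $G$, since $G$ is neither symmetric in an obvious positive-definite way nor diagonal, and its nondegeneracy is what guarantees invertibility of every spatial hybrid motion.
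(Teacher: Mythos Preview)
Your proof is correct and follows essentially the same route as the paper: closure via $(AB)^T G(AB)=B^T G B=G$ and multiplicativity of $\det$, the identity matrix as unit, and the inverse relation obtained by conjugating $A^T G A=G$ by $(A^{-1})^T$ and $A^{-1}$. Your extra remark on $\det(G)$ is harmless but unnecessary here, since $\det(A)=1$ is already part of the definition and gives invertibility directly; the paper simply invokes $A^{-1}$ without further comment.
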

\begin{proof}
	Define $$\mathcal G = \{A \in \mathbb R^{3 \times 3} ~|~ A^TGA = G, \det(A) = 1\}.$$
	For any $A, B \in \mathcal G,$ we have
	$$(AB)^TG(AB) = B^TA^TGAB = B^TGB = G$$
	and
	$$\det(AB) = \det(A)\det(B) = 1.$$
	So $AB \in \mathcal G.$
	
	The associative law holds naturally. The unit matrix
	$\begin{pmatrix}
		1 & 0 & 0\\
		0 & 1 & 0\\
		0 & 0 & 1
	\end{pmatrix} \in \mathcal G$
	is the unit element.
	The inverse element of $A$ is $A^{-1}$ and
	$$(A^{-1})^TGA^{-1} = (A^T)^{-1}A^TGAA^{-1} = G.$$
	So $\mathcal G$ is a group.
\end{proof}

\begin{proposition}\label{prop}
	{\rm For any $A \in \mathcal G$ and any $\bm H_1, \bm H_2 \in \mathbb H^p,$ we have $g(A\bm H_1, A\bm H_2) = g(\bm H_1, \bm H_2)$ and $(A\bm H_1) \times (A\bm H_2) = A(\bm H_1 \times \bm H_2).$
	}
\end{proposition}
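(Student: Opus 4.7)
The first identity is the shorter half and essentially just unwinds the definitions. The plan is to identify $\bm H_k \in \mathbb H^p$ with the column vector of its coordinates in $\{\bm i,\bm\varepsilon,\bm h\}$, so that $g(\bm H_1,\bm H_2) = \bm H_1^{T}G\bm H_2$. Then
$$g(A\bm H_1, A\bm H_2) = (A\bm H_1)^{T} G (A\bm H_2) = \bm H_1^{T} (A^{T}GA) \bm H_2 = \bm H_1^{T}G\bm H_2 = g(\bm H_1,\bm H_2),$$
using only the defining relation $A^{T}GA = G$ of $\mathcal G$.

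For the product identity, assume $g(\bm H_1,\bm H_2) = 0$. By the preservation of $g$ just established, $g(A\bm H_1, A\bm H_2) = 0$ as well, so Proposition 2.1(1) guarantees that both $(A\bm H_1)(A\bm H_2)$ and $A(\bm H_1\bm H_2)$ lie in $\mathbb H^p$. The plan is to show they agree by pairing against an arbitrary $\bm H_3 \in \mathbb H^p$ with $g(\cdot,\cdot)$. On one hand, Proposition 2.1(2) gives
$$g\bigl((A\bm H_1)(A\bm H_2),\, A\bm H_3\bigr) \;=\; \det M,$$
where $M$ is the $3\times 3$ matrix whose rows are the coordinate vectors of $A\bm H_1, A\bm H_2, A\bm H_3$. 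Since these rows are obtained from the coordinate rows of $\bm H_1,\bm H_2,\bm H_3$ by right multiplication by $A^{T}$, one has $\det M = \det(A)\,\det N$, where $N$ is the matrix of coordinates of $\bm H_1,\bm H_2,\bm H_3$. Because $\det(A) = 1$, Proposition 2.1(2) applied again yields
$$g\bigl((A\bm H_1)(A\bm H_2),\, A\bm H_3\bigr) = \det N = g(\bm H_1\bm H_2, \bm H_3) = g\bigl(A(\bm H_1\bm H_2),\, A\bm H_3\bigr),$$
the last equality using the first part of the proposition.

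To conclude, I rewrite the equation as $g\bigl((A\bm H_1)(A\bm H_2) - A(\bm H_1\bm H_2),\, A\bm H_3\bigr) = 0$, valid for every $\bm H_3 \in \mathbb H^p$. Since $\det(A) = 1$, $A$ is invertible, and $A\bm H_3$ ranges over all of $\mathbb H^p$. A quick computation gives $\det(G) = 1 \neq 0$, so $g$ is non-degenerate on $\mathbb H^p$, forcing the difference to vanish. The only mildly delicate point is keeping track of whether $A$ acts from the left (as on the column vector $\bm H_k$) or from the right (as on the coordinate row inside the determinant of Proposition 2.1(2)); once that bookkeeping is done, the determinant identity $\det(NA^{T}) = \det(A)\det(N)$ together with $\det(A) = 1$ does all the work, and non-degeneracy of $g$ finishes the proof.
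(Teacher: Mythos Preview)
Your proof is correct. The first half is identical to the paper's, and the second half follows the same overall strategy: use Proposition~2.1(2) to convert the hybridian product into a $3\times 3$ determinant, exploit $\det(A)=1$, and then separate the two sides by the non-degeneracy of $g$.

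The one genuine difference is your choice of test vector. You pair both sides against $A\bm H_3$, which lets you use the clean identity $\det(A\bm H_1,A\bm H_2,A\bm H_3)=\det(A)\det(\bm H_1,\bm H_2,\bm H_3)$ and then invoke the already-proved invariance $g(\bm H_1\bm H_2,\bm H_3)=g(A(\bm H_1\bm H_2),A\bm H_3)$; surjectivity of $A$ then finishes the argument. The paper instead pairs against $\bm H_3$ directly, which forces it to insert the factor $\det(G^{-1}A^{T}G)=\det(A^{-1})=1$ and pull $A^{-1}=G^{-1}A^{T}G$ through the third column before unwinding the bilinear form by hand. Your route avoids that matrix bookkeeping entirely and is a bit cleaner; the paper's route has the minor advantage of never needing to argue that $A\bm H_3$ ranges over all of $\mathbb H^p$. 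Either way, the substance is the same.
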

\begin{proof}
	$$g(A\bm H_1, A\bm H_2) = (A\bm H_1)^TG(A\bm H_2) = \bm H_1^TA^TGA\bm H_2 = \bm H_1^TG\bm H_2 = g(\bm H_1, \bm H_2).$$
	
	For any $\bm H_3 \in \mathbb H^p,$ we have
	$$\begin{aligned}
		&g((A\bm H_1) \times (A\bm H_2), \bm H_3)\\
		=& -\det(A\bm H_1, A\bm H_2, \bm H_3)\\
		=& -\det(G^{-1}A^TG)\det(A\bm H_1, A\bm H_2, \bm H_3)\\
		=& -\det(\bm H_1, \bm H_2, G^{-1}A^TG\bm H_3)\\
		=& g(\bm H_1 \times \bm H_2, G^{-1}A^TG\bm H_3)\\
		=& \bm H_3^TG^TA(G^{-1})^TG(\bm H_1 \times \bm H_2)\\
		=& \bm H_3^TGA(\bm H_1 \times \bm H_2)\\
		=& g(\bm H_3, A(\bm H_1 \times \bm H_2)).
	\end{aligned}$$
	This implies $(A\bm H_1) \times (A\bm H_2) = A(\bm H_1 \times \bm H_2).$
\end{proof}

\begin{definition}
	{\rm Let $(\gamma, \bm\nu_1, \bm\nu_2) : I \rightarrow \mathbb H^p \times \Delta$ and $(\tilde\gamma, \bm{\tilde\nu_1}, \bm{\tilde\nu_2}) : I \rightarrow \mathbb H^p \times \Delta$ be two non-parabolic spatial hybrid framed curves. We call them congruent through a spatial hybrid motion if there exists $A \in \mathcal G$ and $\bm H_0 \in \mathbb H^p$ such that
		$$\tilde\gamma(t) = A(\gamma(t)) + \bm H_0, ~ \bm{\tilde\nu}_1(t) = A(\bm\nu_1(t)), ~ \bm{\tilde\nu}_2(t) = A(\bm\nu_2(t))$$
		for any $t \in I.$
	}
\end{definition}

By Proposition \ref{prop}, we know when two non-parabolic spatial hybrid framed curves are congruent, their curvatures coincide. Next we claim the existence and uniqueness theorem of non-parabolic spatial hybrid framed curves.

\begin{theorem}[Existence and uniqueness theorem]
	{\rm Given smooth functions $l, m, n, \alpha : I \rightarrow \mathbb R$ and constants $\delta_1, \delta_2 \in \{1, -1\},$ there exists a non-parabolic spatial hybrid framed curve $(\gamma, \bm\nu_1, \bm\nu_2) : I \rightarrow \mathbb H^p \times \Delta,$ whose curvature is $(l, m, n, \alpha)$ and $g(\bm\nu_1(t), \bm\nu_1(t)) = \delta_1, ~ g(\bm\nu_2(t), \bm\nu_2(t)) = \delta_2.$ The curve is unique under spatial hybrid motions.
	}
\end{theorem}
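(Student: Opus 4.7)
The plan is to solve the frame ODE, verify that the algebraic frame relations are preserved, integrate for the base curve, and use Proposition \ref{prop} together with ODE uniqueness to establish congruence. For existence, fix $t_0 \in I$ and choose initial vectors $\bm\nu_1(t_0), \bm\nu_2(t_0) \in \mathbb H^p$ with $g(\bm\nu_i(t_0), \bm\nu_i(t_0)) = \delta_i$ and $g(\bm\nu_1(t_0), \bm\nu_2(t_0)) = 0$; set $\bm\mu(t_0) := \bm\nu_1(t_0)\bm\nu_2(t_0)$. I would extend $(\bm\nu_1, \bm\nu_2, \bm\mu)$ to a smooth solution on $I$ of the linear system
$$
\begin{pmatrix} \bm\nu_1' \\ \bm\nu_2' \\ \bm\mu' \end{pmatrix} = M(t) \begin{pmatrix} \bm\nu_1 \\ \bm\nu_2 \\ \bm\mu \end{pmatrix}, \qquad M(t) = \begin{pmatrix} 0 & l & m \\ -\delta_1\delta_2 l & 0 & n \\ -\delta_2 m & -\delta_1 n & 0 \end{pmatrix},
$$
via the global existence theorem for linear ODEs, and then define $\gamma(t) := \gamma_0 + \int_{t_0}^{t} \alpha(s)\bm\mu(s)\,ds$ for any chosen $\gamma_0 \in \mathbb H^p$.

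Two algebraic properties of this solution must be verified so that $(\gamma, \bm\nu_1, \bm\nu_2)$ is indeed a non-parabolic spatial hybrid framed curve with curvature $(l, m, n, \alpha)$. First, arranging $\bm\nu_1^T, \bm\nu_2^T, \bm\mu^T$ as rows of a matrix $F(t)$, a direct computation gives $MD + DM^T = 0$ for $D := \mathrm{diag}(\delta_1, \delta_2, \delta_1\delta_2)$, so the Gram matrix $K(t) := F(t) G F(t)^T$ satisfies $K' = MK + KM^T$ with $K(t_0) = D$; ODE uniqueness forces $K \equiv D$ on $I$, giving the required $g$-orthogonality and sign conditions. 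Second, the identity $\bm\mu(t) = \bm\nu_1(t)\bm\nu_2(t)$ must persist. Setting $\Phi(t) := \bm\mu(t) - \bm\nu_1(t)\bm\nu_2(t)$ and differentiating, I would use $\bm H^2 = -g(\bm H, \bm H)$ for $\bm H \in \mathbb H^p$ (a direct check from the simplified hybridian product) together with associativity; the scalar curvature terms cancel and one obtains a linear homogeneous equation $\Phi' = -m\Phi\bm\nu_2 - n\bm\nu_1\Phi$ with $\Phi(t_0) = 0$, hence $\Phi \equiv 0$. This cancellation is the step I expect to be the main technical obstacle, because it depends on the precise form of $M$ encoded by $\delta_1, \delta_2$ and is not a formal consequence of the linear ODE alone.

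For uniqueness, let $(\tilde\gamma, \bm{\tilde\nu}_1, \bm{\tilde\nu}_2)$ be another such curve with the same curvature and signs, and denote by $P(t_0), \tilde P(t_0)$ the $3 \times 3$ matrices whose columns are the frame vectors at $t_0$. Since $P(t_0)^T G P(t_0) = D = \tilde P(t_0)^T G \tilde P(t_0)$ and $\det P(t_0) = \det \tilde P(t_0) = \delta_1\delta_2$ (by Proposition 2.1(2) applied to $\bm\mu = \bm\nu_1\bm\nu_2$), the matrix $A := \tilde P(t_0) P(t_0)^{-1}$ lies in $\mathcal G$. Taking $\bm H_0 := \tilde\gamma(t_0) - A\gamma(t_0)$, Proposition \ref{prop} shows that $(A\gamma + \bm H_0, A\bm\nu_1, A\bm\nu_2)$ is again a non-parabolic spatial hybrid framed curve with curvature $(l, m, n, \alpha)$, and it coincides with $(\tilde\gamma, \bm{\tilde\nu}_1, \bm{\tilde\nu}_2)$ at $t = t_0$; ODE uniqueness for the frame and integration of $\gamma' = \alpha\bm\mu$ then yield global equality on $I$.
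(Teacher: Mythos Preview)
Your argument is correct and follows the same strategy as the paper: solve the linear frame ODE, show by ODE uniqueness that the Gram relations are conserved (the paper writes this out as a $6\times 6$ scalar system rather than your matrix identity $MD+DM^T=0$), integrate $\alpha\bm\mu$ to obtain $\gamma$, and for uniqueness match initial frames by a spatial hybrid motion and invoke ODE uniqueness. You are in fact more careful than the paper on two points it leaves implicit: you explicitly construct $A=\tilde P(t_0)P(t_0)^{-1}$ and verify $A\in\mathcal G$, and you check that the relation $\bm\mu=\bm\nu_1\bm\nu_2$ propagates (your $\Phi$-equation is correct, though once $K\equiv D$ is known this also follows by the simpler observation that $\bm\mu$ and $\bm\nu_1\bm\nu_2$ are both unit vectors in the one-dimensional orthogonal complement of $\mathrm{span}\{\bm\nu_1,\bm\nu_2\}$ and agree at $t_0$).
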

\begin{proof}
	{\rm Fix some $t_0 \in I.$ Consider a system
		\begin{equation}\label{eq1}
			\begin{pmatrix}
				\bm\nu_1'(t)\\
				\bm\nu_2'(t)\\
				\bm\mu'(t)
			\end{pmatrix}
			=
			\begin{pmatrix}
				0 & l(t) & m(t) \\
				-\delta_1\delta_2l(t) & 0 & n(t) \\
				-\delta_2m(t) & -\delta_1n(t) & 0
			\end{pmatrix}
			\begin{pmatrix}
				\bm\nu_1(t)\\
				\bm\nu_2(t)\\
				\bm\mu(t)
			\end{pmatrix}
		\end{equation}
		with the initial value
		$$\begin{matrix}
			g(\bm\nu_1(t_0), \bm\nu_1(t_0)) = \delta_1, & g(\bm\nu_2(t_0), \bm\nu_2(t_0)) = \delta_2, & g(\bm\mu(t_0), \bm\mu(t_0)) = \delta_1\delta_2,\\
			g(\bm\nu_1(t_0), \bm\nu_2(t_0)) = 0, & g(\bm\nu_1(t_0), \bm\mu(t_0)) = 0, & g(\bm\nu_2(t_0), \bm\mu(t_0)) = 0.
		\end{matrix}$$
		$l, m, n$ are smooth, so the solution of equation (\ref{eq1}) exists.
		
		Let $(\bm\nu_1, \bm\nu_2, \bm\mu) : I \rightarrow \mathbb H^p \times \mathbb H^p \times \mathbb H^p$ be a solution of the equation (\ref{eq1}). Define functions $a_1, a_2, ..., a_6 : I \rightarrow \mathbb R$ by
		$$\begin{matrix}
			a_1(t) = g(\bm\nu_1(t), \bm\nu_1(t)), & a_2(t) = g(\bm\nu_2(t), \bm\nu_2(t)), & a_3(t) = g(\bm\mu(t), \bm\mu(t)),\\
			a_4(t) = g(\bm\nu_1(t), \bm\nu_2(t)), & a_5(t) = g(\bm\nu_1(t), \bm\mu(t)), & a_6(t) = g(\bm\nu_2(t), \bm\mu(t)).
		\end{matrix}$$
		Consider the following system
		$$\begin{pmatrix}
			a_1'(t)\\
			a_2'(t)\\
			a_3'(t)\\
			a_4'(t)\\
			a_5'(t)\\
			a_6'(t)
		\end{pmatrix}
		=
		\begin{pmatrix}
			0 & 0 & 0 & 2l(t) & 2m(t) & 0 \\
			0 & 0 & 0 & -2\delta_1\delta_2l(t) & 0 & 2n(t) \\
			0 & 0 & 0 & 0 & -2\delta_2m(t) & -2\delta_1n(t) \\
			-\delta_1\delta_2l(t) & l(t) & 0 & 0 & n(t) & m(t) \\
			-\delta_2m(t) & 0 & m(t) & -\delta_1n(t) & 0 & l(t) \\
			0 & -\delta_1n(t) & n(t) & -\delta_2m(t) & -\delta_1\delta_2l(t) & 0
		\end{pmatrix}
		\begin{pmatrix}
			a_1(t)\\
			a_2(t)\\
			a_3(t)\\
			a_4(t)\\
			a_5(t)\\
			a_6(t)
		\end{pmatrix}$$
		with the initial value
		$$a_1(t_0) = \delta_1, ~ a_2(t_0) = \delta_2, ~ a_3(t_0) = \delta_1\delta_2, ~ a_4(t_0) = 0, ~ a_5(t_0) = 0, ~ a_6(t_0) = 0.$$
		It has the unique solution
		$$a_1(t) = \delta_1, ~ a_2(t) = \delta_2, ~ a_3(t) = \delta_1\delta_2, ~ a_4(t) = 0, ~ a_5(t) = 0, ~ a_6(t) = 0.$$
		That is to say $(\bm\nu_1, \bm\nu_2, \bm\mu) : I \rightarrow \Delta \times \mathbb H^p$ and $g(\bm\nu_1(t), \bm\mu(t)) = g(\bm\nu_2(t), \bm\mu(t)) = 0.$
		
		Take $\gamma(t) = \int \alpha(t)\bm\mu(t) {\rm d} t,$ then $(\gamma, \bm\nu_1, \bm\nu_2) : I \rightarrow \mathbb H^p \times \Delta$ is a non-parabolic spatial hybrid framed curve with the curvature $(l, m, n, \alpha)$ and $g(\bm\nu_1(t), \bm\nu_1(t)) = \delta_1, ~ g(\bm\nu_2(t), \bm\nu_2(t)) = \delta_2.$
		
		Let $(\gamma, \bm\nu_1, \bm\nu_2) : I \rightarrow \mathbb H^p \times \Delta$ and $(\tilde\gamma, \bm{\tilde\nu}_1, \bm{\tilde\nu}_2) : I \rightarrow \mathbb H^p \times \Delta$ be two non-parabolic spatial hybrid framed curves with the same curvature $(l, m, n, \alpha)$ and
		$$g(\bm\nu_1(t), \bm\nu_1(t)) = \delta_1, ~ g(\bm\nu_2(t), \bm\nu_2(t)) = \delta_2, ~ g(\bm{\tilde\nu}_1(t), \bm{\tilde\nu}_1(t)) = \delta_1, ~ g(\bm{\tilde\nu}_2(t), \bm{\tilde\nu}_2(t)) = \delta_2.$$
		Then there exists $A \in \mathcal G$ and $\bm H_0 \in \mathbb H^p$ such that
		$$\tilde\gamma(t_0) = A(\gamma(t_0)) + \bm H_0, ~ \bm{\tilde\nu}_1(t_0) = A(\bm\nu_1(t_0)), ~ \bm{\tilde\nu}_2(t_0) = A(\bm\nu_2(t_0))$$
		for some $t_0 \in I.$ Then
		$$\bm{\tilde\mu}(t_0) = \bm{\tilde\nu}_1(t_0) \times \bm{\tilde\nu}_2(t_0) = A(\bm\nu_1(t_0) \times \bm\nu_2(t_0)) = A(\bm\mu(t_0)).$$
		
		$(A\gamma + \bm H_0, A\bm\nu_1, A\bm\nu_2, A\bm\mu) : I \rightarrow \mathbb H^p \times \Delta \times \mathbb H^p$ and $(\tilde\gamma, \bm{\tilde\nu}_1, \bm{\tilde\nu}_2, \bm{\tilde\mu}) : I \rightarrow \mathbb H^p \times \Delta \times \mathbb H^p$ are both solutions of the system
		$$\left\{
		\begin{array}{l}
			\gamma'(t) = \alpha(t)\bm\mu(t),\\
			\bm\nu_1'(t) = l(t)\bm\nu_2(t) + m(t)\bm\mu(t),\\
			\bm\nu_2'(t) = -\delta_1\delta_2l(t)\bm\nu_1(t) + n(t)\bm\mu(t),\\
			\bm\mu'(t) = -\delta_2m(t)\bm\nu_1(t) - \delta_1n(t)\bm\nu_2(t).
		\end{array}
		\right.$$
		So $(A\gamma + \bm H_0, A\bm\nu_1, A\bm\nu_2, A\bm\mu) = (\tilde\gamma, \bm{\tilde\nu}_1, \bm{\tilde\nu}_2, \bm{\tilde\mu}).$ That is $(\gamma, \bm\nu_1, \bm\nu_2)$ and $(\tilde\gamma, \bm{\tilde\nu}_1, \bm{\tilde\nu}_2)$ are congruent under spatial hybrid motions.
	}
\end{proof}

For a non-parabolic spatial hybrid framed curve $(\gamma, \bm\nu_1, \bm\nu_2) : I \rightarrow \mathbb H^p \times \Delta$ with the curvature $(l, m, n, \alpha),$ any linear combination of $\bm\nu_1(t)$ and $\bm\nu_2(t)$ is $g$-orthogonal to $\gamma'(t).$ When $\delta_1 m^2(t) + \delta_2 n^2(t) \neq 0$ for all $t \in I,$ we take
$$\begin{pmatrix}
	\bm n_1(t)\\
	\bm n_2(t)
\end{pmatrix}
=
\frac{1}{\sqrt{\sigma(\delta_1 m^2(t) + \delta_2 n^2(t))}}
\begin{pmatrix}
	\sigma\delta_1 m(t) & \sigma\delta_2 n(t) \\
	-n(t) & m(t)
\end{pmatrix}
\begin{pmatrix}
	\bm\nu_1(t)\\
	\bm\nu_2(t)
\end{pmatrix},$$
where $\sigma = {\rm sgn}(\delta_1 m^2(t) + \delta_2 n^2(t)).$
Then\\
$$\begin{aligned}
	&g(\bm n_1(t), \bm n_1(t))\\
	=& \frac{1}{\sigma(\delta_1 m^2(t) + \delta_2 n^2(t))}g(\sigma\delta_1 m(t)\bm\nu_1(t) + \sigma\delta_2 n(t)\bm\nu_2(t), \sigma\delta_1 m(t)\bm\nu_1(t) + \sigma\delta_2 n(t)\bm\nu_2(t))\\
	=& \frac{1}{\sigma(\delta_1 m^2(t) + \delta_2 n^2(t))}(\delta_1 m^2(t) + \delta_2 n^2(t))\\
	=& \sigma,\\
	&g(\bm n_2(t), \bm n_2(t))\\
	=& \frac{1}{\sigma(\delta_1 m^2(t) + \delta_2 n^2(t))}g(-n(t)\bm\nu_1(t) + m(t)\bm\nu_2(t), -n(t)\bm\nu_1(t) + m(t)\bm\nu_2(t))\\
	=& \frac{1}{\sigma(\delta_1 m^2(t) + \delta_2 n^2(t))}(\delta_1 n^2(t) + \delta_2 m^2(t))\\
	=& \sigma\delta_1\delta_2,\\
	&g(\bm n_1(t), \bm n_2(t))\\
	=& \frac{1}{\sigma(\delta_1 m^2(t) + \delta_2 n^2(t))}g(\sigma\delta_1 m(t)\bm\nu_1(t) + \sigma\delta_2 n(t)\bm\nu_2(t), -n(t)\bm\nu_1(t) + m(t)\bm\nu_2(t))\\
	=& 0,\\
	&\bm n_1(t) \times \bm n_2(t)\\
	=& \frac{1}{\sigma(\delta_1 m^2(t) + \delta_2 n^2(t))} (\sigma\delta_1 m(t)\bm\nu_1(t) + \sigma\delta_2 n(t)\bm\nu_2(t)) \times (-n(t)\bm\nu_1(t) + m(t)\bm\nu_2(t))\\
	=& \bm\mu(t).
\end{aligned}$$

So $\{\bm n_1(t), \bm n_2(t), \bm\mu(t)\}$ is also a $g$-orthogonal frame along $\gamma.$ The Frenet type formulas are
$$\begin{pmatrix}
	\bm n_1'(t)\\
	\bm n_2'(t)\\
	\bm\mu'(t)
\end{pmatrix}
=
\begin{pmatrix}
	0 & L(t) & M(t) \\
	-\delta_1\delta_2 L(t) & 0 & 0 \\
	-\sigma\delta_1\delta_2 M(t) & 0 & 0
\end{pmatrix}
\begin{pmatrix}
	\bm n_1(t)\\
	\bm n_2(t)\\
	\bm\mu(t)
\end{pmatrix},
~\gamma'(t) = \alpha(t) \bm\mu(t),$$
where
$$L(t) = \frac{\delta_1\delta_2m(t)n'(t) - \delta_1\delta_2m'(t)n(t)}{\sigma(\delta_1 m^2(t) + \delta_2 n^2(t))} + \sigma\delta_1 l(t), ~ M(t) = \sqrt{\sigma(\delta_1 m^2(t) + \delta_2 n^2(t))}.$$
The $\bm n_1$ direction and $\bm n_2$ direction are called the principal normal direction and the binormal direction of $\gamma.$

\section{Curves generated by non-parabolic spatial hybrid framed curves}

In this section, we define evolutes of non-parabolic spatial hybrid framed curves using distance squared functions. As its inverse process, we define involutes and discuss their relationship.

\begin{definition}\label{def}
	{\rm Let $(\gamma, \bm\nu_1, \bm\nu_2) : I \rightarrow \mathbb H^p \times \Delta$ be a non-parabolic spatial hybrid framed curve. The distance squared function of $(\gamma, \bm\nu_1, \bm\nu_2)$ is defined by $F : I \times \mathbb H^p \rightarrow \mathbb R,$
		$$F(t,\bm{x}) = g(\gamma(t) - \bm{x}, \gamma(t) - \bm{x}).$$
		For some fixed $\bm{x} \in \mathbb H^p,$ $F(t,\bm{x})$ is denoted by $f_x(t).$
	}
\end{definition}

\begin{proposition}
	{\rm Let $(\gamma, \bm\nu_1, \bm\nu_2) : I \rightarrow \mathbb H^p \times \Delta$ be a non-parabolic spatial hybrid framed curve with the curvature $(l, m, n, \alpha).$ Assume
		$\alpha(t)\left(m(t)n'(t) - m'(t)n(t) + \delta_1\delta_2 l(t)m^2(t) + l(t)n^2(t)\right) \neq 0$
		for any $t \in I.$ Then the followings hold.
		\begin{enumerate}
			\item [\rm (1)] $f_x'(t_0) = 0$ if and only if there exist $\lambda_1, \lambda_2 \in \mathbb R$ such that $\gamma(t_0) - \bm{x} = \lambda_1\bm\nu_1(t_0) + \lambda_2\bm\nu_2(t_0).$
			
			\item[\rm (2)] $f_x'(t_0) = f_x''(t_0)= 0$ if and only if $\gamma(t_0) - \bm{x} = \lambda_1\bm\nu_1(t_0) + \lambda_2\bm\nu_2(t_0)$ and $\lambda_1 m(t_0) + \lambda_2 n(t_0) = \alpha(t_0).$
			
			\item[\rm (3)] $f_x'(t_0) = f_x''(t_0) = f_x'''(t_0) = 0$ if and only if
			$$\begin{aligned}
				\gamma(t_0) - \bm{x} = ~& \frac{n'(t_0)\alpha(t_0) - n(t_0)\alpha'(t_0) + \delta_1\delta_2 l(t_0)m(t_0)\alpha(t_0)}{m(t_0)n'(t_0) - m'(t_0)n(t_0) + \delta_1\delta_2 l(t_0)m^2(t_0) + l(t_0)n^2(t_0)}\bm\nu_1(t_0)\\
				+& \frac{m(t_0)\alpha'(t_0) - m'(t_0)\alpha(t_0) + l(t_0)n(t_0)\alpha(t_0)}{m(t_0)n'(t_0) - m'(t_0)n(t_0) + \delta_1\delta_2 l(t_0)m^2(t_0) + l(t_0)n^2(t_0)}\bm\nu_2(t_0).
			\end{aligned}$$
		\end{enumerate}
	}
\end{proposition}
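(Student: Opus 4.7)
The plan is to introduce the auxiliary function $\phi(t) = g(\bm\mu(t), \gamma(t) - \bm x)$ so that $f_x'(t) = 2g(\gamma'(t), \gamma(t)-\bm x) = 2\alpha(t)\phi(t)$, and then translate the three vanishing conditions $f_x'(t_0) = f_x''(t_0) = f_x'''(t_0) = 0$ into $\phi(t_0) = \phi'(t_0) = \phi''(t_0) = 0$. Differentiating $f_x' = 2\alpha\phi$ gives $f_x'' = 2\alpha'\phi + 2\alpha\phi'$ and $f_x''' = 2\alpha''\phi + 4\alpha'\phi' + 2\alpha\phi''$, so once the lower-order conditions hold, each new derivative at $t_0$ reduces to $2\alpha(t_0)\phi^{(k)}(t_0)$; since $\alpha(t_0) \neq 0$ by the standing hypothesis, the $\phi$-reformulation is equivalent.

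For (1), the equivalence $\phi(t_0) = 0 \iff \gamma(t_0) - \bm x$ lies in $\mathrm{span}\{\bm\nu_1(t_0), \bm\nu_2(t_0)\}$ is immediate, because $\{\bm\nu_1, \bm\nu_2, \bm\mu\}$ is a $g$-orthogonal basis of $\mathbb H^p$ along $\gamma$. For (2), I differentiate $\phi$ and apply $\gamma' = \alpha\bm\mu$, $g(\bm\mu,\bm\mu) = \delta_1\delta_2$ and $\bm\mu' = -\delta_2 m\bm\nu_1 - \delta_1 n\bm\nu_2$ from the Frenet type formulas, together with the representation $\gamma(t_0) - \bm x = \lambda_1\bm\nu_1(t_0) + \lambda_2\bm\nu_2(t_0)$ from (1). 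A short inner-product calculation yields $\phi'(t_0) = \delta_1\delta_2\bigl(\alpha(t_0) - \lambda_1 m(t_0) - \lambda_2 n(t_0)\bigr)$, so the vanishing of $\phi'(t_0)$ is exactly $\lambda_1 m(t_0) + \lambda_2 n(t_0) = \alpha(t_0)$.

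For (3), the key step is computing $\bm\mu''$ by differentiating $\bm\mu' = -\delta_2 m\bm\nu_1 - \delta_1 n\bm\nu_2$ and substituting the Frenet formulas for $\bm\nu_1'$ and $\bm\nu_2'$. Expanding $\phi''(t_0) = g(\bm\mu''(t_0), \gamma(t_0)-\bm x) + 2g(\bm\mu'(t_0), \gamma'(t_0)) + g(\bm\mu(t_0), \gamma''(t_0))$ and using $g(\bm\mu', \gamma') = 0$ together with the expansion of $\gamma'' = \alpha'\bm\mu + \alpha\bm\mu'$, the condition $\phi''(t_0) = 0$ becomes, after multiplication by $\delta_1\delta_2$, the linear relation $\lambda_1(m'(t_0) - l(t_0)n(t_0)) + \lambda_2(\delta_1\delta_2 m(t_0) l(t_0) + n'(t_0)) = \alpha'(t_0)$. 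Combining this with the equation $\lambda_1 m(t_0) + \lambda_2 n(t_0) = \alpha(t_0)$ from (2) produces a $2\times 2$ linear system in $(\lambda_1, \lambda_2)$ whose determinant is precisely $m(t_0)n'(t_0) - m'(t_0)n(t_0) + \delta_1\delta_2 l(t_0)m^2(t_0) + l(t_0)n^2(t_0)$, nonzero by the standing hypothesis. Cramer's rule then produces exactly the claimed expressions.

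The main obstacle is bookkeeping: keeping the signs $\delta_1, \delta_2$ straight when expanding $\bm\mu''$ in the basis $\{\bm\nu_1, \bm\nu_2, \bm\mu\}$, and then recognising that the coefficient determinant of the resulting linear system matches the factor appearing in the hypothesis on $\alpha(mn' - m'n + \delta_1\delta_2 lm^2 + ln^2)$. Once this identification is made, nothing beyond elementary linear algebra is required.
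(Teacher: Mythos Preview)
Your argument is correct and follows essentially the same route as the paper: differentiate the distance-squared function, apply the Frenet type formulas, and solve the resulting $2\times 2$ linear system by Cramer's rule. The only difference is organisational: the paper expands $\tfrac{1}{2}f_x',\ \tfrac{1}{2}f_x'',\ \tfrac{1}{2}f_x'''$ directly in terms of $g(\gamma-\bm x,\bm\nu_1),\ g(\gamma-\bm x,\bm\nu_2),\ g(\gamma-\bm x,\bm\mu)$, whereas your factorisation $f_x'=2\alpha\phi$ with $\phi=g(\bm\mu,\gamma-\bm x)$ strips off the $\alpha$ factor at the outset and yields slightly cleaner bookkeeping---but the substance is identical.
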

\begin{proof}
	By Definition \ref{def}, we can get
	$$\begin{aligned}
		\frac{1}{2}f_x'(t) =& \alpha(t)g(\gamma(t) - \bm{x}, \bm\mu(t)),\\
		\frac{1}{2}f_x''(t) =& \alpha'(t)g(\gamma(t) - \bm{x}, \bm\mu(t)) + \delta_1\delta_2\alpha^2(t) - \alpha(t)g(\gamma(t) - \bm{x}, \delta_2m(t)\bm\nu_1(t) + \delta_1n(t)\bm\nu_2(t)),\\
		\frac{1}{2}f_x'''(t) =& \left(-\delta_2 m'(t)\alpha(t) + \delta_2 l(t)n(t)\alpha(t) - 2\delta_2m(t)\alpha'(t)\right)g(\gamma(t) - \bm{x}, \bm\nu_1(t))\\
		&- \left(\delta_1 n'(t)\alpha(t) + \delta_2 l(t)m(t)\alpha(t) + 2\delta_1n(t)\alpha'(t)\right)g(\gamma(t) - \bm{x}, \bm\nu_2(t))\\
		&- \left(\delta_2 m^2(t)\alpha(t) + \delta_1 n^2(t)\alpha(t) + \alpha''(t)\right)g(\gamma(t) - \bm{x}, \bm\mu(t)) + 3\delta_1\delta_2\alpha(t)\alpha'(t).
	\end{aligned}$$
	So
	\begin{enumerate}
		\item [\rm (1)] $f_x'(t_0) = 0$ if and only if there exist $\lambda_1, \lambda_2 \in \mathbb R$ such that $\gamma(t_0) - \bm{x} = \lambda_1\bm\nu_1(t_0) + \lambda_2\bm\nu_2(t_0).$
		
		\item[\rm (2)] $f_x'(t_0) = f_x''(t_0)= 0$ if and only if $\gamma(t_0) - \bm{x} = \lambda_1\bm\nu_1(t_0) + \lambda_2\bm\nu_2(t_0)$ and $\lambda_1 m(t_0) + \lambda_2 n(t_0) = \alpha(t_0).$
		
		\item[\rm (3)] $f_x'(t_0) = f_x''(t_0) = f_x'''(t_0) = 0$ if and only if $\gamma(t_0) - \bm{x} = \lambda_1\bm\nu_1(t_0) + \lambda_2\bm\nu_2(t_0)$ and
		$$\left\{
		\begin{array}{l}
			\lambda_1 m(t_0) + \lambda_2 n(t_0) = \alpha(t_0),\\
			\lambda_1\left(m'(t_0) - l(t_0)n(t_0)\right) + \lambda_2\left(n'(t_0) + \delta_1\delta_2 l(t_0)m(t_0)\right) = \alpha'(t_0).
		\end{array}
		\right.$$
		This means
		$$\begin{aligned}
			\gamma(t_0) - \bm{x} = ~& \frac{n'(t_0)\alpha(t_0) - n(t_0)\alpha'(t_0) + \delta_1\delta_2 l(t_0)m(t_0)\alpha(t_0)}{m(t_0)n'(t_0) - m'(t_0)n(t_0) + \delta_1\delta_2 l(t_0)m^2(t_0) + l(t_0)n^2(t_0)}\bm\nu_1(t_0)\\
			+& \frac{m(t_0)\alpha'(t_0) - m'(t_0)\alpha(t_0) + l(t_0)n(t_0)\alpha(t_0)}{m(t_0)n'(t_0) - m'(t_0)n(t_0) + \delta_1\delta_2 l(t_0)m^2(t_0) + l(t_0)n^2(t_0)}\bm\nu_2(t_0).
		\end{aligned}$$
	\end{enumerate}
\end{proof}

\begin{definition}
	{\rm Let $(\gamma, \bm\nu_1, \bm\nu_2) : I \rightarrow \mathbb H^p \times \Delta$ be a non-parabolic spatial hybrid framed curve with the curvature $(l, m, n, \alpha).$ Assume
		$m(t)n'(t) - m'(t)n(t) + \delta_1\delta_2 l(t)m^2(t) + l(t)n^2(t) \neq 0$
		for any $t \in I.$ The evolute of $(\gamma, \bm\nu_1, \bm\nu_2)$ is defined by $\mathcal Ev_\gamma : I \rightarrow \mathbb H^p,$
		$$\begin{aligned}
			\mathcal Ev_\gamma(t) = \gamma(t) &- \frac{n'(t)\alpha(t) - n(t)\alpha'(t) + \delta_1\delta_2 l(t)m(t)\alpha(t)}{m(t)n'(t) - m'(t)n(t) + \delta_1\delta_2 l(t)m^2(t) + l(t)n^2(t)}\bm\nu_1(t_0)\\
			&- \frac{m(t)\alpha'(t) - m'(t)\alpha(t) + l(t)n(t)\alpha(t)}{m(t)n'(t) - m'(t)n(t) + \delta_1\delta_2 l(t)m^2(t) + l(t)n^2(t)}\bm\nu_2(t_0).
		\end{aligned}$$
	}
\end{definition}

\begin{remark}
	{\rm  Let $(\gamma, \bm n_1, \bm n_2) : I \rightarrow \mathbb H^p \times \Delta$ be a non-parabolic spatial hybrid framed curve with the curvature $(L, M, 0, \alpha).$ Assume $L(t) \neq 0$ for any $t \in I.$ Then its evolute is
		$$\mathcal Ev_\gamma(t) = \gamma(t) - \frac{\alpha(t)}{M(t)}\bm n_1(t) - \delta_1\delta_2 \frac{M(t)\alpha'(t) - M'(t)\alpha(t)}{L(t)M^2(t)}\bm n_2(t).$$
	}
\end{remark}

Differentiating the above equation, we can get
$$\begin{aligned}
	\mathcal Ev_\gamma'(t) =& \alpha(t)\bm\mu(t) - \frac{\alpha'(t)M(t) - \alpha(t)M'(t)}{M^2(t)}\bm n_1(t) - \frac{\alpha(t)}{M(t)}\big(L(t)\bm n_2(t) + M(t)\bm\mu(t)\big)\\
	&- \delta_1\delta_2 \frac{\rm d}{{\rm d}t}\left(\frac{M(t)\alpha'(t) - M'(t)\alpha(t)} {L(t)M^2(t)}\right)\bm n_2(t) + \frac{\alpha'(t)M(t) - \alpha(t)M'(t)}{M^2(t)}\bm n_1(t)\\
	=& \left(-\frac{\alpha(t)L(t)}{M(t)} - \delta_1\delta_2 \frac{\rm d}{{\rm d}t}\left(\frac{M(t)\alpha'(t) - M'(t)\alpha(t)} {L(t)M^2(t)}\right)\right)\bm n_2(t).
\end{aligned}$$
So $(\mathcal Ev_\gamma, \bm n_1, \bm\mu)$ is a non-parabolic spatial hybrid framed curve.

Now we consider the inverse process of evolutes.
\begin{definition}
	{\rm  Let $(\gamma, \bm n_1, \bm n_2) : I \rightarrow \mathbb H^p \times \Delta$ be a non-parabolic spatial hybrid framed curve with the curvature $(L, M, 0, \alpha).$ The involute of $(\gamma, \bm n_1, \bm n_2)$ is defined by $\mathcal Inv_\gamma : I \rightarrow \mathbb H^p,$
		$$\mathcal Inv_\gamma(t) = \gamma(t) + f_1(t)\bm n_1(t) + f_2(t)\bm\mu(t),$$
		where $(f_1, f_2)$ is the solution of the system
		$$\left\{
		\begin{array}{l}
			f_1'(t) = \sigma\delta_1\delta_2 M(t)f_2(t),\\
			f_2'(t) = -M(t)f_1(t) - \alpha(t).
		\end{array}
		\right.$$
	}
\end{definition}

It obvious that
$$\begin{aligned}
	\mathcal Inv_\gamma'(t) =& \alpha(t)\bm\mu(t) + \sigma\delta_1\delta_2M(t)f_2(t)\bm n_1(t) + f_1(t)L(t)\bm n_2(t) + f_1(t)M(t)\bm\mu(t)\\
	&- \left(M(t)f_1(t) + \alpha(t)\right)\bm \mu(t) - \sigma\delta_1\delta_2 M(t)f_2(t)\bm n_1(t)\\
	=& f_1(t)L(t)\bm n_2(t).
\end{aligned}$$
So $(\mathcal Inv_\gamma, \bm n_1, \bm\mu)$ is a non-parabolic spatial hybrid framed curve. Note that $\bm n_1(t) \times \bm\mu(t) = -\sigma\bm n_2(t).$ Then $\{\bm n_1(t), \bm\mu(t), -\sigma\bm n_2(t)\}$ is a $g$-orthogonal frame along $\mathcal Inv_\gamma.$ The Frenet type formulas are
$$\begin{pmatrix}
	\bm n_1'(t)\\
	\bm\mu'(t)\\
	-\sigma\bm n_2'(t)
\end{pmatrix}
=
\begin{pmatrix}
	0 & M(t) & -\sigma L(t)\\
	-\sigma\delta_1\delta_2 M(t) & 0 & 0\\
	\sigma\delta_1\delta_2 L(t) & 0 & 0
\end{pmatrix}
\begin{pmatrix}
	\bm n_1(t)\\
	\bm\mu(t)\\
	-\sigma\bm n_2(t)
\end{pmatrix},$$
$$\mathcal Inv_\gamma'(t) = \big(-\sigma f_1(t)L(t)\big)\big(-\sigma \bm n_2(t)\big).$$

\begin{theorem}
	{\rm Let $(\gamma, \bm n_1, \bm n_2) : I \rightarrow \mathbb H^p \times \Delta$ be a non-parabolic spatial hybrid framed curve with the curvature $(L, M, 0, \alpha),$ where $L(t) \neq 0$ for any $t \in I.$ $\mathcal Inv_\gamma$ is its involute. Then the evolute of $(\mathcal Inv_\gamma, \bm n_1, \bm\mu)$ is $\gamma.$
	}
\end{theorem}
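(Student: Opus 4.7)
The plan is to apply the evolute formula from the Remark directly to the non-parabolic spatial hybrid framed curve $(\mathcal{Inv}_\gamma, \bm n_1, \bm\mu)$ and show that the two correction terms in that formula are exactly $f_1(t)\bm n_1(t)$ and $f_2(t)\bm\mu(t)$, so that they cancel with the same terms appearing in the definition of $\mathcal{Inv}_\gamma$. The key idea is that the ODE system defining $(f_1,f_2)$ is engineered precisely so that this cancellation occurs.

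First I would read off the Frenet-type data of $(\mathcal{Inv}_\gamma, \bm n_1, \bm\mu)$ from the matrix displayed just before the theorem: its frame is $\{\bm n_1,\bm\mu,-\sigma\bm n_2\}$, its curvature is $(\tilde L,\tilde M,\tilde N,\tilde\alpha)=(M,-\sigma L,0,-\sigma f_1 L)$, and the relevant sign data are $\tilde\delta_1=g(\bm n_1,\bm n_1)=\sigma$ and $\tilde\delta_2=g(\bm\mu,\bm\mu)=\delta_1\delta_2$, so $\tilde\delta_1\tilde\delta_2=\sigma\delta_1\delta_2$. Because $\tilde N\equiv 0$ and $\tilde L(t)=M(t)\neq 0$ (the latter being built into the construction of $\bm n_1,\bm n_2$), the Remark applies and yields
$$\mathcal{Ev}_{\mathcal{Inv}_\gamma}(t)=\mathcal{Inv}_\gamma(t)-\frac{\tilde\alpha(t)}{\tilde M(t)}\bm n_1(t)-\tilde\delta_1\tilde\delta_2\frac{\tilde M(t)\tilde\alpha'(t)-\tilde M'(t)\tilde\alpha(t)}{\tilde L(t)\tilde M^2(t)}\bm\mu(t).$$

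Next I would simplify the two coefficients. Substitution gives $\tilde\alpha/\tilde M=(-\sigma f_1 L)/(-\sigma L)=f_1$ immediately. For the second coefficient, a direct expansion (the factors of $-\sigma$ square to $1$) collapses $\tilde M\tilde\alpha'-\tilde M'\tilde\alpha$ to $L^2 f_1'$, while $\tilde L\tilde M^2=ML^2$; the ratio is $f_1'/M$, and multiplying by $\tilde\delta_1\tilde\delta_2=\sigma\delta_1\delta_2$ gives $\sigma\delta_1\delta_2\, f_1'/M$. At this point one invokes the defining ODE $f_1'=\sigma\delta_1\delta_2\, M f_2$ of the involute, which reduces this second coefficient to exactly $f_2$. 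Plugging back in, $\mathcal{Ev}_{\mathcal{Inv}_\gamma}(t)=\mathcal{Inv}_\gamma(t)-f_1(t)\bm n_1(t)-f_2(t)\bm\mu(t)=\gamma(t)$, which is the claim.

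The main obstacle is purely bookkeeping of the $\sigma,\delta_1,\delta_2$ signs: one must verify that the factor $\tilde\delta_1\tilde\delta_2=\sigma\delta_1\delta_2$ arising from the new frame is exactly the factor appearing in the first equation $f_1'=\sigma\delta_1\delta_2 M f_2$ of the involute's defining system, so that the two cancel cleanly (rather than producing an extra $\pm 1$). Once the identification of $\bm n_1,\bm\mu$ with the roles of $\bm\nu_1,\bm\nu_2$ in the Remark is made carefully, the remainder of the proof is mechanical substitution.
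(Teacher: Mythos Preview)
Your proposal is correct and is essentially identical to the paper's proof: both apply the Remark's evolute formula to $(\mathcal{Inv}_\gamma,\bm n_1,\bm\mu)$, reduce the first coefficient to $f_1$ by cancelling $-\sigma L$, collapse the second numerator to $L^2 f_1'$ so that the coefficient becomes $\sigma\delta_1\delta_2\,f_1'/M$, and then invoke the defining ODE $f_1'=\sigma\delta_1\delta_2 M f_2$ to turn this into $f_2$, yielding $\mathcal{Ev}_{\mathcal{Inv}_\gamma}=\mathcal{Inv}_\gamma-f_1\bm n_1-f_2\bm\mu=\gamma$. Your explicit identification $\tilde\delta_1\tilde\delta_2=\sigma\delta_1\delta_2$ is exactly the bookkeeping the paper uses tacitly.
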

\begin{proof}
	$$\begin{aligned}
		&\mathcal Ev_{\mathcal Inv_\gamma}(t)\\
		=& \mathcal Inv_\gamma(t) - \frac{-\sigma f_1(t)L(t)} {-\sigma L(t)}\bm n_1(t) - \sigma\delta_1\delta_2\frac{\sigma L(t)\big(\sigma f_1(t)L(t)\big)' - \sigma L'(t) \sigma f_1(t)L(t)}{L^2(t)M(t)}\bm\mu(t)\\
		=& \mathcal Inv_\gamma(t) - f_1(t)\bm n_1(t) - \sigma\delta_1\delta_2\frac{f_1'(t)}{M(t)} \bm\mu(t)\\
		=& \gamma(t).
	\end{aligned}$$
\end{proof}

For a non-parabolic spatial hybrid framed curve, we can define its pedal and contrapedal curve by $g$-orthogonal projection.

\begin{definition}
	{\rm Let $(\gamma, \bm n_1, \bm n_2) : I \rightarrow \mathbb H^p \times \Delta$ be a non-parabolic spatial hybrid framed curve with the curvature $(L, M, 0, \alpha).$ $\bm p \in \mathbb H^p$ is a fixed point.
		\begin{itemize}
			\item [\rm (1)] The pedal curve of $(\gamma, \bm n_1, \bm n_2)$ relative to $\bm p$ is defined by $\mathcal Pe_{\gamma,p} : I \rightarrow \mathbb H^p,$
			$$\mathcal Pe_{\gamma,p}(t) = \bm p - \sigma\delta_1\delta_2 g(\bm p - \gamma(t), \bm n_2(t))\bm n_2(t).$$
			\item [\rm (2)] The contrapedal curve of $(\gamma, \bm n_1, \bm n_2)$ relative to $\bm p$ is defined by $\mathcal{CP}e_{\gamma,p} : I \rightarrow \mathbb H^p,$
			$$\mathcal{CP}e_{\gamma,p}(t) = \bm p - \delta_1\delta_2 g(\bm p - \gamma(t), \bm\mu(t))\bm\mu(t).$$
		\end{itemize}
	}
\end{definition}

There are relationships among evolutes, involutes, pedal and contrapedal curves.
\begin{theorem}
	{\rm Let $(\gamma, \bm n_1, \bm n_2) : I \rightarrow \mathbb H^p \times \Delta$ be a non-parabolic spatial hybrid framed curve with the curvature $(L, M, 0, \alpha),$ where $L(t) \neq 0$ for any $t \in I.$ $\bm p \in \mathbb H^p$ is a fixed point. Then the pedal curve of the evolute $(\mathcal Ev_\gamma, \bm n_1, \bm\mu)$ coincides with the contrapedal curve of $(\gamma, \bm n_1, \bm n_2).$ The contrapedal curve of the involute $(\mathcal Inv_\gamma, \bm n_1, \bm\mu)$ coincides with the pedal curve of $(\gamma, \bm n_1, \bm n_2).$	
	}
\end{theorem}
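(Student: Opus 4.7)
The plan is to reduce each of the two claimed equalities to a single $g$-orthogonality condition between a "difference vector" and a frame vector of the original curve, and then read that orthogonality straight off from the explicit formulas already in the paper. The key observation is that the pedal and contrapedal curves depend on $\gamma$ only through the one-dimensional projection on a specified frame direction, so modifying $\gamma$ by anything orthogonal to that direction does not change the projected point. In particular, I never have to expand the long formula for $\mathcal Ev_\gamma$ or solve the system defining $(f_1,f_2)$ in closed form.

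For the first equality, I first apply the pedal definition to $(\mathcal Ev_\gamma,\bm n_1,\bm\mu)$. The binormal of this frame is $\bm\mu$, with $g(\bm\mu,\bm\mu)=\delta_1\delta_2$, so the projection coefficient $1/g(\bm\mu,\bm\mu)=\delta_1\delta_2$ yields
$$\mathcal Pe_{\mathcal Ev_\gamma,p}(t)=\bm p-\delta_1\delta_2\,g\bigl(\bm p-\mathcal Ev_\gamma(t),\bm\mu(t)\bigr)\bm\mu(t),$$
which has the same shape as $\mathcal{CP}e_{\gamma,p}(t)$. Their equality therefore reduces to $g(\mathcal Ev_\gamma(t)-\gamma(t),\bm\mu(t))=0$. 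But the Remark shows $\mathcal Ev_\gamma(t)-\gamma(t)$ is a linear combination of $\bm n_1(t)$ and $\bm n_2(t)$, each of which is $g$-orthogonal to $\bm\mu(t)$, so the claim follows.

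For the second equality, I apply the contrapedal definition to $(\mathcal Inv_\gamma,\bm n_1,\bm\mu)$. The "third" frame vector here is $\bm n_1(t)\bm\mu(t)=-\sigma\bm n_2(t)$, with $g$-norm $\sigma\delta_1\delta_2$; unfolding the signs (the two factors of $-\sigma$ coming from the direction and its norm cancel) gives
$$\mathcal{CP}e_{\mathcal Inv_\gamma,p}(t)=\bm p-\sigma\delta_1\delta_2\,g\bigl(\bm p-\mathcal Inv_\gamma(t),\bm n_2(t)\bigr)\bm n_2(t),$$
matching the shape of $\mathcal Pe_{\gamma,p}(t)$. The identity then reduces to $g(\mathcal Inv_\gamma(t)-\gamma(t),\bm n_2(t))=0$, which is immediate from $\mathcal Inv_\gamma(t)-\gamma(t)=f_1(t)\bm n_1(t)+f_2(t)\bm\mu(t)$ together with $g(\bm n_1,\bm n_2)=g(\bm\mu,\bm n_2)=0$. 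The only place that demands care is this sign-tracking step when transporting the pedal/contrapedal formulas from the frame $(\bm n_1,\bm n_2)$ to the frame $(\bm n_1,\bm\mu)$, since the numerical coefficients there are reciprocals of $g$-norms rather than universal constants; once the four expressions are lined up, the proof collapses to the two frame-orthogonality observations above.
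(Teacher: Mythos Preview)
Your proof is correct and follows essentially the same route as the paper: the paper's argument is exactly the two-line computation you describe, passing from $g(\bm p-\mathcal Ev_\gamma,\bm\mu)$ to $g(\bm p-\gamma,\bm\mu)$ (respectively from $g(\bm p-\mathcal Inv_\gamma,\bm n_2)$ to $g(\bm p-\gamma,\bm n_2)$) via the orthogonality of $\mathcal Ev_\gamma-\gamma$ to $\bm\mu$ (respectively of $\mathcal Inv_\gamma-\gamma$ to $\bm n_2$). Your write-up is simply more explicit about how the pedal/contrapedal coefficients for the new frame $(\bm n_1,\bm\mu)$ are read off as reciprocals of $g$-norms, which the paper leaves implicit.
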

\begin{proof}
	$$\begin{aligned}
		\mathcal Pe_{\mathcal Ev_\gamma,p}(t) =& \bm p - \delta_1\delta_2 g(\bm p - \mathcal Ev_\gamma(t), \bm\mu(t))\bm\mu(t)\\
		=& \bm p - \delta_1\delta_2 g(\bm p - \gamma(t), \bm\mu(t))\bm\mu(t)\\
		=& \mathcal{CP}e_{\gamma,p}(t),\\
		\mathcal{CP}e_{\mathcal Inv_\gamma,p}(t) =& \bm p - \sigma\delta_1\delta_2 g(\bm p - \mathcal Inv_\gamma(t), \bm n_2(t))\bm n_2(t)\\
		=& \bm p - \sigma\delta_1\delta_2 g(\bm p - \gamma(t), \bm n_2(t))\bm n_2(t)\\
		=& \mathcal Pe_{\gamma,p}(t).
	\end{aligned}$$
\end{proof}

\section{Example}
\begin{example}
	{\rm Let $(\gamma, \bm n_1, \bm n_2) : [0, 2\pi] \rightarrow \mathbb H^p \times \Delta$ be
		$$\begin{aligned}
			\gamma(t) =& \left(\sin^3 t - \frac{\sqrt{10}}{4}\cos 2t \right)\bm i + (\sin^3 t)\bm\varepsilon + (\cos^3 t)\bm h,\\
			\bm n_1(t) =& (\cos t)\bm i + (\cos t)\bm \varepsilon + (\sin t)\bm h,\\
			\bm n_2(t) =& (3 + \sqrt{10}\sin t)\bm i + (\sqrt{10}\sin t)\bm \varepsilon - (\sqrt{10}\cos t)\bm h.
		\end{aligned}$$
		Then
		$$\bm\mu(t) = (\sqrt{10} + 3\sin t)\bm i + (3\sin t)\bm\varepsilon - (3\cos t)\bm h.$$
		The Frenet type formulas are
		$$\begin{pmatrix}
			\bm n_1'(t)\\
			\bm n_2'(t)\\
			\bm\mu'(t)
		\end{pmatrix}
		=
		\begin{pmatrix}
			0 & -\sqrt{10} & 3 \\
			\sqrt{10} & 0 & 0 \\
			3 & 0 & 0
		\end{pmatrix}
		\begin{pmatrix}
			\bm n_1(t)\\
			\bm n_2(t)\\
			\bm\mu(t)
		\end{pmatrix},
		~\gamma'(t) = \sin t \cos t \bm\mu(t).$$
		
		Then the evolute of $(\gamma, \bm n_1, \bm n_2)$ is
		$$\mathcal Ev_\gamma(t) = \left(\frac{2}{3}\sin^3 t - \frac{3\sqrt{10}}{20}\cos 2t\right)\bm i + \left(\frac{2}{3}\sin^3 t\right)\bm\varepsilon + \left(\frac{2}{3}\cos^3 t\right)\bm h.$$
		The involute of $(\gamma, \bm n_1, \bm n_2)$ is
		$$\mathcal Inv_\gamma(t) = \gamma(t) + f_1(t)\bm n_1(t) + f_2(t)\bm\mu(t),$$
		where $(f_1, f_2)$ satisfies
		$$\left\{
		\begin{array}{l}
			f_1'(t) = -3f_2(t),\\
			f_2'(t) = -3f_1(t) - \sin t \cos t.
		\end{array}
		\right.$$
		That is
		$$\left\{
		\begin{array}{l}
			f_1(t) = -\dfrac{3}{26}\sin 2t + c_1{\rm e}^{3t} + c_2{\rm e}^{-3t},\\
			f_2(t) = \dfrac{1}{13}\cos 2t - c_1{\rm e}^{3t} + c_2{\rm e}^{-3t},
		\end{array}
		\right.$$
		where $c_1, c_2 \in \mathbb R.$
		If we take $c_1 = c_2 = 0,$ the involute of $(\gamma, \bm n_1, \bm n_2)$ is
		$$\mathcal Inv_\gamma(t) = \left(\frac{10}{13}\sin^3 t - \frac{9\sqrt{10}}{52}\cos 2t\right)\bm i + \left(\frac{10}{13}\sin^3 t\right)\bm\varepsilon + \left(\frac{10}{13}\cos^3 t\right)\bm h.$$
		We draw them in Figure \ref{ex1a}.
		
		Take $\bm p = \bm 0 \in \mathbb H^p,$ the pedal curve of $(\gamma, \bm n_1, \bm n_2)$ relative to $\bm p$ is
		$$\mathcal Pe_{\gamma,p}(t) = \cos 2t \left(\left(-\frac{3\sqrt{10}}{4} - \frac{5}{2}\sin t\right)\bm i - \left(\frac{5}{2}\sin t\right)\bm\varepsilon + \left(\frac{5}{2}\cos t\right)\bm h\right).$$
		The contrapedal curve of $(\gamma, \bm n_1, \bm n_2)$ relative to $\bm p$ is
		$$\mathcal{CP}e_{\gamma,p}(t) = \frac{1}{2}\cos 2t \left((\sqrt{10} + 3\sin t)\bm i + (3\sin t)\bm\varepsilon - (3\cos t)\bm h\right).$$
		We draw them in Figure \ref{ex1b}.
		
		\begin{figure}[H]
			\centering
			\includegraphics[width=15cm]{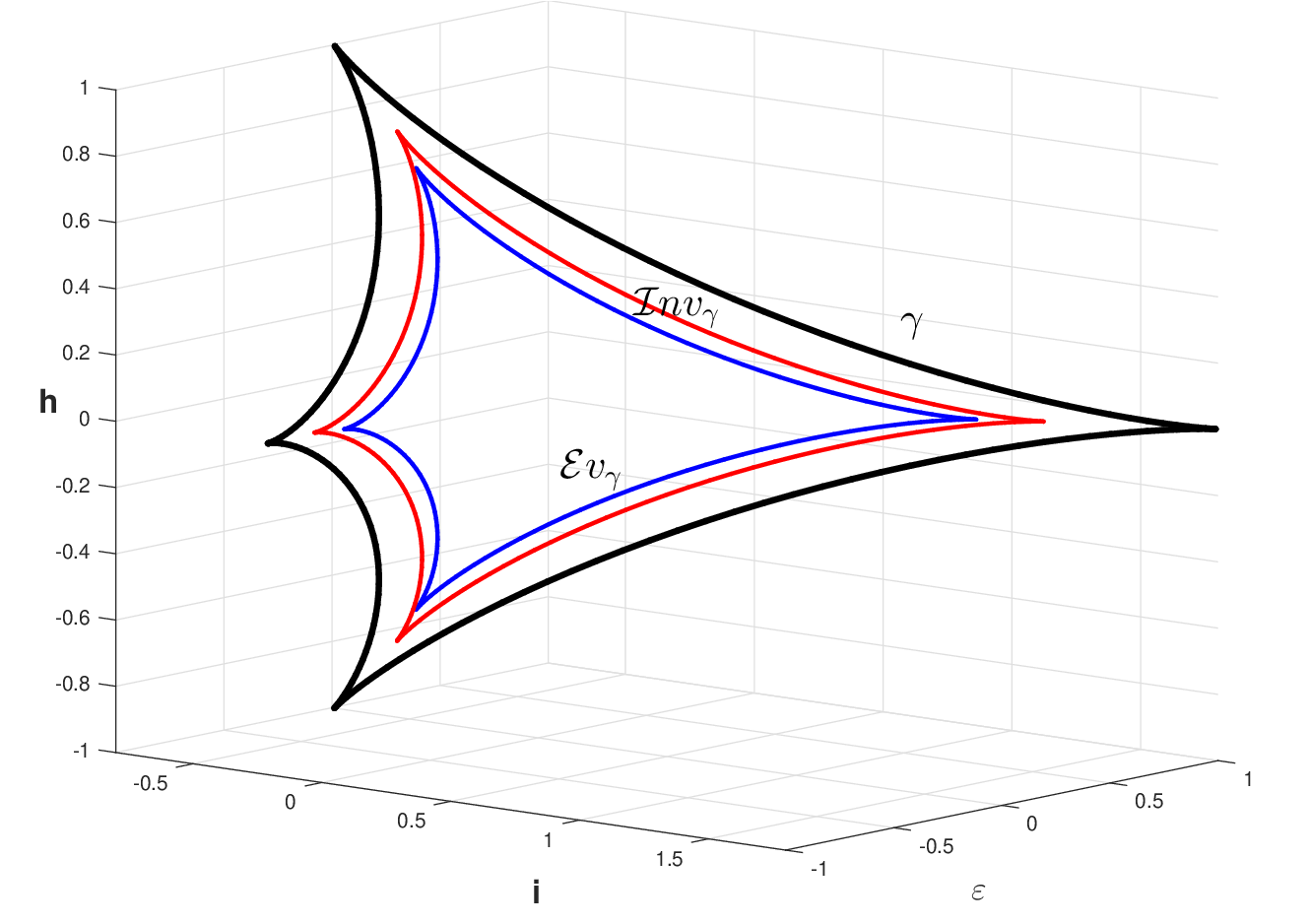}
			\caption{A non-parabolic spatial hybrid framed base curve (black) with its evolute (blue) and involute (red).}
			\label{ex1a}
		\end{figure}
		
		\begin{figure}[H]
			\centering
			\includegraphics[width=15cm]{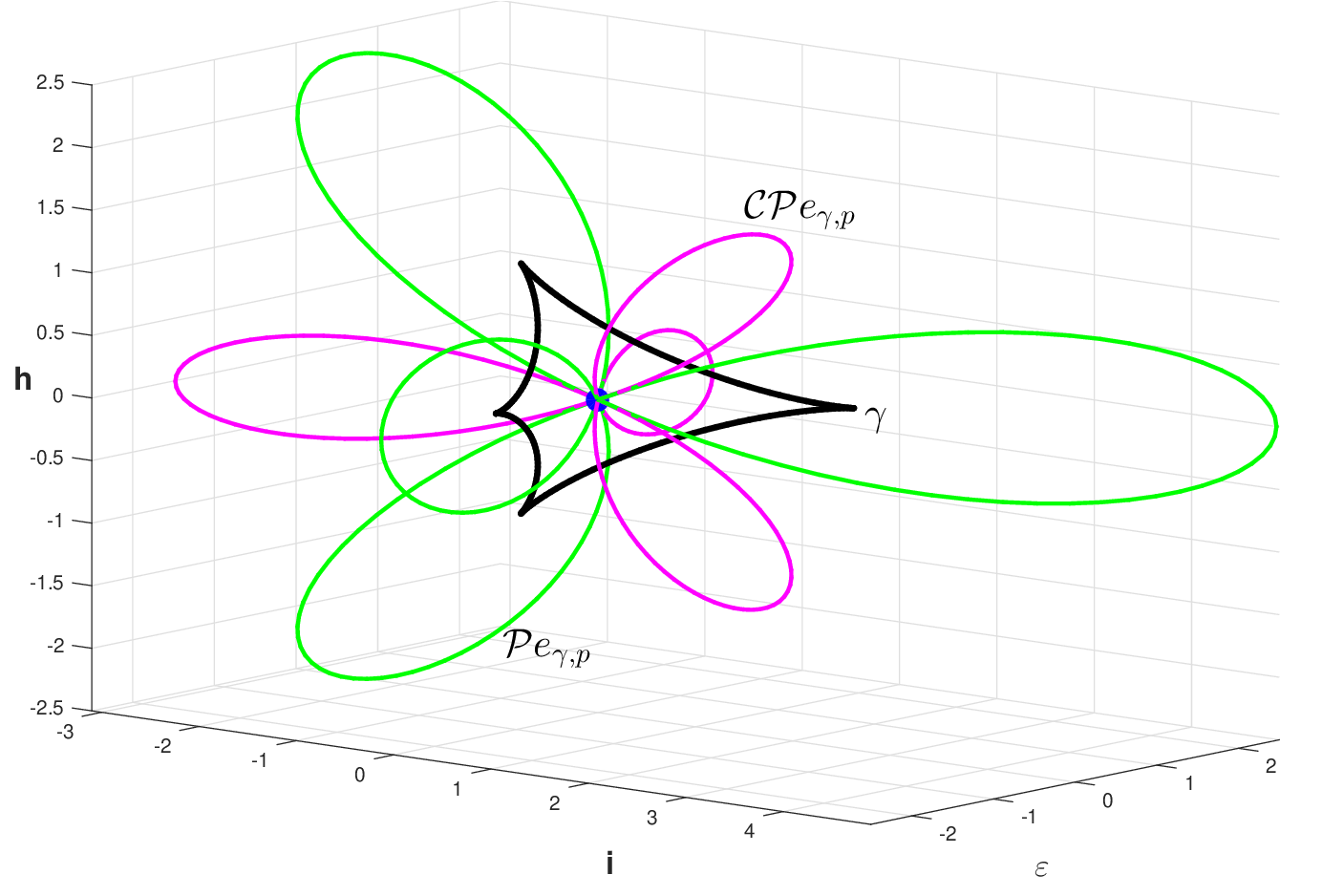}
			\caption{A non-parabolic spatial hybrid framed base curve (black) with its pedal curve (green) and contrapedal curve (magenta).}
			\label{ex1b}
		\end{figure}
	}
\end{example}

\subsection*{Acknowledgment}
This work was funded by Yanshan University Research Start-up Fund (Grant No. 8190857) and Science Research Project of Hebei Education Department (Grant No. QN2026104).

%%%%%%%%%%%%%%%%%%%%%%%%%%%%%%%%%


\begin{thebibliography}{nn}
{\small

\bibitem{MA}
M. Akb\i y\i k,
On hybrid curves,
{\it J. Eng. Technol. Appl. Sci.}
8(3) (2023) 119--130.

\bibitem{RLB}
R. L. Bishop,
There is more than one way to frame a curve,
{\it Amer. Math. Monthly}
82 (1975) 246--251.

\bibitem{TF}
T. Fukunaga, M. Takahashi,
Existence and uniqueness for Legendre curves,
{\it J. Geom.}
104(2) (2013)  297--307.

\bibitem{SH1}
S. Honda, M. Takahashi,
Framed curves in the Euclidean space,
{\it Adv. Geom.}
16(3) (2016) 265--276.

\bibitem{SH2}
S. Honda, M. Takahashi,
Bertrand and Mannheim curves of framed curves in the 3-dimensional Euclidean space,
{\it Turkish J. Math.}
44(3) (2020) 883--899.

\bibitem{SH3}
S. Honda, M. Takahashi,
Evolutes and focal surfaces of framed immersions in the Euclidean space,
{\it Proc. Roy. Soc. Edinburgh Sect. A}
150(1) (2020) 497--516.

\bibitem{MO}
M. \"{O}zdemir,
Introduction to hybrid numbers,
{\it Adv. Appl. Clifford Algebr.} 
28(1) (2018) Paper No. 11, 32.

\bibitem{IO}
\.{I}. \"{O}zt\"{u}rk, M. \"{O}zdemir, 
Similarity of hybrid numbers,
{\it Math. Methods Appl. Sci.}
43(15) (2020) 8867--8881.

\bibitem{KY1}
K. Yao, M. Li, E. Li, D. Pei,
Pedal and contrapedal curves of framed immersions in the Euclidean 3-space,
{\it Mediterr. J. Math.}
20(4) (2023) Paper No. 204, 13.

\bibitem{KY2}
K. Yao, D. Pei,
Pedal and negative pedal surfaces of framed curves in the Euclidean 3-space,
{\it Open Math.}
23(1) (2025) Paper No. 20250206.

\bibitem{BDY}
B. D. Yaz\i c\i, S. \"{O}. Karaku\c{s}, M. Tosun,
On the classification of framed rectifying curves in Euclidean space,
{\it Math. Methods Appl. Sci.}
45(18) (2022) 12089--12098.

\bibitem{OGY}
\"{O}. G. Y\i ld\i z,
On the evolution of framed curves in Euclidean 3-space,
{\it Math. Methods Appl. Sci.}
45(18) (2022) 12158--12166.
}
\end{thebibliography}
\end{document}